\title{On maximal Albanese dimensional varieties}
\author{Osamu Fujino} 
\subjclass[2000]{14E30.}
\date{2009/11/12, version 1.01}
\address{Department of Mathematics, Faculty of Science, 
Kyoto University, 
Kyoto 606-8502, Japan} 
\email{fujino@math.kyoto-u.ac.jp}
\newcommand{\Supp}[0]{{\operatorname{Supp}}}
\newcommand{\Exc}[0]{{\operatorname{Exc}}}
\newcommand{\Alb}[0]{{\operatorname{Alb}}}
\newcommand{\Var}[0]{{\operatorname{Var}}}
\newtheorem{thm}{Theorem}[section]
\newtheorem{lem}[thm]{Lemma}
\newtheorem{cor}[thm]{Corollary}
\newtheorem{prop}[thm]{Proposition}
\newtheorem{conj}[thm]{Conjecture}
\theoremstyle{definition}
\newtheorem{defn}[thm]{Definition}
\newtheorem{rem}[thm]{Remark}
\newtheorem*{ack}{Acknowledgments}      
\newtheorem*{notation}{Notation}
\begin{document}
\bibliographystyle{amsalpha+}
\begin{abstract}
We prove that every smooth projective variety with maximal Albanese dimension 
has a good minimal model. We also treat 
Ueno's problem on subvarieties of Abelian varieties.  
\end{abstract}
\maketitle

\tableofcontents

\section{Introduction}
In this short paper, we prove that 
every smooth projective 
variety with maximal Albanese dimension 
has a good minimal model. 
It is an easy consequence of \cite{bchm}. 
This paper is also a supplement to \cite{fujino1}. 
We will also treat Ueno's problem on subvarieties of 
Abelian varieties. 
A key point of this paper is the simple fact that 
there are no rational curves on an Abelian variety. 
The topics treated here seem to be easy exercises for experts. 

Let us recall the following minimal model conjecture.

\begin{conj}[Good minimal model conjecture (weak form)]\label{conj1}
Let $X$ be a smooth projective variety defined over $\mathbb C$.
Assume that $K_X$ is pseudo-effective.
Then there exists a normal projective variety $X'$ which
satisfies the following conditions{\em{:}}
\begin{itemize}
\item[(i)] $X'$ is birationally equivalent to $X$.
\item[(ii)] $X'$ has only $\mathbb Q$-factorial terminal singularities.
\item[(iii)] $K_{X'}$ is semi-ample.
\end{itemize}
In particular, the Kodaira dimension $\kappa (X)=\kappa (X, K_X)$ is non-negative.
We sometimes call $X'$ a {\em{good minimal model}} of $X$.
\end{conj}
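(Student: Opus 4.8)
The plan is to prove the conjecture through the two pillars of higher-dimensional birational geometry: first run a minimal model program to reach a model on which the canonical divisor is nef, and then promote nefness to semi-ampleness by abundance. Since $K_X$ is pseudo-effective, the natural opening move is to run a $K_X$-minimal model program with scaling of an ample divisor. By \cite{bchm} the required flips exist and each elementary step is a divisorial contraction or a flip preserving $\mathbb Q$-factoriality and terminal singularities, so whatever model the program produces automatically satisfies conditions (i) and (ii). The entire content of the first pillar is therefore termination: the sequence of flips must stop, and at its end the output $X'$ must carry a nef $K_{X'}$.

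First I would establish termination. When $K_X$ is big this is a theorem of \cite{bchm}, so the difficulty is confined to the pseudo-effective but non-big case. The standard device here is the Iitaka fibration: assuming $\kappa(X)\geq 0$ one compares $K_X$ with the pullback of an ample class from the base of the fibration and tries to reduce the existence of a minimal model of $X$ to the analogous problem for the general fiber together with a relative statement over the base, arguing by induction on $\dim X$. Unconditional termination in this generality is not presently available; one must either feed in the existence of (good) minimal models in lower dimension or invoke the known equivalences linking special termination, non-vanishing, and the existence of minimal models.

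Granting a minimal model $X'$ with $K_{X'}$ nef, the second and decisively harder pillar is abundance: the claim that a nef canonical divisor on a terminal variety is semi-ample, which is condition (iii). This is where I expect the real obstacle. Note that abundance already contains the non-vanishing assertion $\kappa(X')\geq 0$ promised by the conjecture, since a semi-ample $K_{X'}$ gives a morphism defined by $|mK_{X'}|$ and $\kappa$ is a birational invariant. Abundance is a theorem in dimension at most three by work of Miyaoka and Kawamata, and in all dimensions when the numerical dimension of $K_{X'}$ equals its Kodaira dimension; the outstanding case $\nu(K_{X'})>\kappa(X')$ is genuinely open. The approach I would take is Kawamata's: derive $\kappa(X')\geq 0$ from non-vanishing, form the Iitaka fibration of $K_{X'}$, and run an induction on the fibration in which $K_{X'}$ is compared with the pullback of a semi-ample divisor from the base, the error being controlled by the canonical bundle formula and the semipositivity of its moduli part.

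In summary, both pillars rest on statements that are established in low dimension and conjectural in general, so the only realistic route to the conjecture as stated is a simultaneous induction on $\dim X$ that returns lower-dimensional abundance and minimal-model existence into the two steps above. The genuinely hard part is abundance in the range $\nu>\kappa$, together with the non-vanishing needed to start it; no presently known technique settles either unconditionally.
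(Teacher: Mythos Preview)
The statement you were asked to prove is Conjecture~\ref{conj1} in the paper, not a theorem: the paper does not prove it and does not claim to. It records the conjecture, lists the previously known cases (A)--(D), and then establishes the further special case of varieties of maximal Albanese dimension (Corollary~\ref{cor25}). There is therefore no proof in the paper to compare your attempt against.

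Your write-up is an accurate survey of why the conjecture is open rather than a proof of it. You correctly isolate termination of flips beyond the big case handled by \cite{bchm} and abundance---specifically non-vanishing and the range $\nu(K_{X'})>\kappa(X')$---as the two unresolved pillars, and you explicitly concede that neither can be settled unconditionally with present techniques. That is the honest state of affairs, but it means what you have written is a discussion, not a proof; the gap is precisely the one you yourself name. If the assignment was to prove Conjecture~\ref{conj1} in full generality, no argument available today will close it, and your outline does not pretend otherwise.
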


\begin{rem}
By \cite{bchm},
we can replace (ii) with the following
slightly weaker condition:~(ii$'$) 
$X'$ has at most canonical singularities.
\end{rem}

The conjecture:~Conjecture \ref{conj1}
was established in the following cases.
\begin{itemize}
\item[(A)] $\dim X\leq 3$ (see, for example, \cite{fafa}).
\item[(B)] varieties of general type in 
any dimension (see, for example, \cite{bchm}).
\item[(C)] $\Delta$-regular divisors on complete toric varieties in 
any dimension (see \cite{ishii}).
\item[(D)] irregular fourfolds (see \cite{fujino4}).
\end{itemize}
As stated above, in this paper, 
we will prove that the conjecture:~Conjecture \ref{conj1}
holds for projective maximal Albanese dimensional varieties.
\begin{notation} 
For a proper birational 
morphism $f:X\to Y$, the 
{\em{exceptional locus}} 
$\Exc (f)\subset X$ is the locus 
where $f$ is not an isomorphism. 

We will freely use the basic notation and 
definitions in \cite{km} and \cite{ueno}. 
\end{notation}

\begin{ack}
The author 
was partially supported by The Inamori Foundation and by 
the Grant-in-Aid for Young Scientists (A) $\sharp$20684001 from 
JSPS. He thanks Professor Daisuke Matsushita whose 
questions made him consider this problem. 
\end{ack}

We will work over $\mathbb C$, the complex number field, throughout this 
paper. 

\section{Preliminaries} 

Let us recall the definition of maximal 
Albanese dimensional varieties. 

\begin{defn}
Let $X$ be a smooth projective variety. Let $\Alb(X)$ be 
the Albanese variety of $X$ and 
$\alpha: X\to \Alb (X)$ the corresponding Albanese 
map. 
We say that $X$ has {\em{maximal Albanese dimension}} 
if $\dim \alpha (X)=\dim X$. 
\end{defn}

\begin{rem}
A smooth projective variety $X$ has maximal Albanese dimension 
if and only if the cotangent bundle 
of $X$ is generically generated by 
its global sections, that is, 
$$
H^0(X, \Omega^1_X)\otimes \mathcal O_X\to 
\Omega^1_X
$$ 
is surjective at the generic point of $X$. It can 
be checked without any difficulties. 
\end{rem}

We note that the notion of maximal Albanese dimension 
is birationally invariant. 
So, we can define the notion of maximal Albanese dimension 
for singular varieties as follows. 

\begin{defn}
Let $X$ be a projective variety. 
We say that $X$ has {\em{maximal Albanese dimension}} 
if there is a resolution $\pi:\overline X\to X$ such 
that $\overline X$ has maximal Albanese dimension. 
\end{defn}

The following lemma is almost obvious by the definition of 
maximal Albanese dimensional varieties and 
the basic properties of 
Albanese mappings. 
We leave the proof for the reader's exercise. 

\begin{lem}\label{lem23} 
Let $X$ be a projective 
variety with maximal Albanese dimension. 
Let $\pi:\overline X\to X$ be a resolution and 
$\alpha:\overline X\to \Alb (\overline X)$ the Albanese 
mapping. 
Let $Y$ be a subvariety of $X$. 
Assume that 
$Y\not\subset \pi(\Exc (\pi)\cup \Exc (\beta))$, 
where $\beta:\overline X\to V$ is the Stein factorization of 
$\alpha:\overline X\to \Alb (\overline X)$. 
Then $Y$ has maximal Albanese dimension. 
\end{lem}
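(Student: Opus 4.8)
The plan is to reduce the statement to a dimension count for the Albanese image of a resolution of $Y$, using the universal property of the Albanese variety together with the geometric meaning of the hypothesis on $Y$. First I would introduce the open set $U := X\setminus \pi(\Exc(\pi)\cup \Exc(\beta))$. Since $Y\not\subset \pi(\Exc(\pi)\cup \Exc(\beta))$ and $Y$ is irreducible, the intersection $Y\cap U$ is a nonempty, hence dense, open subset of $Y$.

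The next step is to record that over $U$ both $\pi$ and $\beta$ are isomorphisms. Indeed $\pi^{-1}(U)\to U$ is an isomorphism, as $U$ avoids $\pi(\Exc(\pi))$, and $\pi^{-1}(U)$ is disjoint from $\Exc(\beta)$, as $U$ avoids $\pi(\Exc(\beta))$, so $\beta$ is an isomorphism on $\pi^{-1}(U)$. Because $\beta$ is the Stein factorization of $\alpha$, the remaining map $V\to \Alb(\overline X)$ is finite; here maximal Albanese dimension is used to ensure $\dim V=\dim \overline X$, so that $\beta$ is birational and $\Exc(\beta)$ is a proper closed subset. Consequently $\alpha$, being the composite of $\beta$ with a finite map, is finite-to-one on $\pi^{-1}(U)$.

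Then I would pass to the strict transform $\overline Y\subset \overline X$ of $Y$, so that $\pi|_{\overline Y}\colon \overline Y\to Y$ is birational and $\overline Y\cap \pi^{-1}(U)$ is dense in $\overline Y$. Since $\alpha$ is finite-to-one on this dense open subset, $\dim \alpha(\overline Y)=\dim \overline Y=\dim Y$. Now I would choose a resolution $\nu\colon \widetilde Y\to \overline Y$ with Albanese map $a\colon \widetilde Y\to \Alb(\widetilde Y)$, and consider the composite $g\colon \widetilde Y\to \overline Y\hookrightarrow \overline X\to \Alb(\overline X)$, where the last arrow is $\alpha$. By the universal property of the Albanese variety, $g$ factors (up to a translation) as $g=h\circ a$ for some morphism $h\colon \Alb(\widetilde Y)\to \Alb(\overline X)$. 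Hence $\dim a(\widetilde Y)\geq \dim h(a(\widetilde Y))=\dim g(\widetilde Y)=\dim \alpha(\overline Y)=\dim Y=\dim \widetilde Y$, and since the reverse inequality always holds, $a$ is generically finite onto its image. Thus $\widetilde Y$, and therefore $Y$, has maximal Albanese dimension.

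The only genuinely delicate point is the assertion that $\alpha$ is finite---not merely generically finite---over $U$; this is precisely what the term $\Exc(\beta)$ in the hypothesis guarantees through the Stein factorization, and it is what forbids the generic fibre of $\overline Y$ from being contracted by $\alpha$. The remaining ingredients---the behaviour of strict transforms under $\pi$ and the functoriality of the Albanese map---are routine.
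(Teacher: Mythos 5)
Your proof is correct, and it fills in exactly the argument the paper has in mind: the author explicitly leaves this lemma as an exercise ("almost obvious by the definition\dots and the basic properties of Albanese mappings"), and your route---use the hypothesis to see that $\alpha$ is finite-to-one on a dense open subset of the strict transform of $Y$, hence $\dim\alpha(\overline Y)=\dim Y$, then invoke the universal property of the Albanese of a resolution of $Y$---is the standard intended one. You also correctly isolate the one point that needs the maximal Albanese dimension hypothesis up front, namely that $\beta$ is birational so that $\Exc(\beta)$ is a well-defined proper closed subset.
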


Let us recall some basic definitions. 

\begin{defn}[Iitaka's $D$-dimension and numerical 
$D$-dimension]
Let $X$ be a normal projective 
variety and $D$ a $\mathbb Q$-Cartier $\mathbb Q$-divisor. 
Assume that $mD$ is Cartier for a positive 
integer $m$. 
Let 
$$\Phi_{|tmD|}: X\dashrightarrow \mathbb P^{\dim |tmD|}
$$ 
be rational mappings given by 
linear systems $|tmD|$ for positive integers $t$. 
We define {\em{Iitaka's $D$-dimension}} 
\begin{eqnarray*}
\kappa (X, D)=\left\{
\begin{array}{ll} 
\underset{t>0}{\max} \dim \Phi _{|tmD|}(X), & {\text{if}} \ \ 
|tmD|\ne \emptyset 
\ \ {\text{for some}}\ \  t, 
\\ 
-\infty, & {\text{otherwise}}.
\end{array}\right. 
\end{eqnarray*}
In case $D$ is nef, 
we can also define the {\em{numerical 
$D$-dimension}} 
$$
\nu(X, D)=\max \{ \, e \,|\, D^e\not \equiv 0 \}, 
$$ 
where $\equiv$ denotes {\em{numerical equivalence}}. 
We note that $\nu(X, D)\geq \kappa (X, D)$ holds. 
\end{defn}

In this paper, we adopt the following definition of 
{\em{log terminal models}} for klt pairs. 

\begin{defn}[Log terminal models for klt pairs]\label{model}
Let $f:X\to S$ be a projective morphism of normal quasi-projective
varieties.
Suppose that $(X, B)$ is klt and let $\phi:X\dashrightarrow X'$ be  a
birational map of normal quasi-projective varieties over $S$, where
$X'$ is projective over $S$.
We put $B'=\phi_*B$.
In this case,
$(X', B')$ is a {\em{log terminal model}} of
$(X, B)$ over $S$ if the following conditions hold.
\begin{itemize}
\item[(i)] $\phi^{-1}$ contracts no divisors.
\item[(ii)] $(X', B')$ is a $\mathbb Q$-factorial klt pair.
\item[(iii)] $K_{X'}+B'$ is nef over $S$.
\item[(iv)] $a(E, X, B)<a(E, X', B')$ for all $\phi$-exceptional divisors
$E\subset X$.
\end{itemize}
\end{defn}
\section{Main results} 

\subsection{Minimal model}

First, we recall the following elementary but very 
important lemma. 

\begin{lem}[Negative rational curves]\label{lem32}
Let $X$ be a projective variety and $B$ an 
effective $\mathbb R$-divisor 
on $X$ such that 
$(X, B)$ is log canonical. 
Assume that $K_X+B$ is not nef. 
Then there exists a rational curve $C$ on $X$ such that 
$(K_X+B)\cdot C<0$. 
\end{lem}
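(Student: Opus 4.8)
The statement is precisely the rational-curve part of the Cone Theorem in the log canonical setting, so the plan is to reduce to a situation where the Cone Theorem is available and then transport the resulting curve back to $X$. First I would pass to a $\mathbb{Q}$-factorial dlt pair: by \cite{bchm} there is a projective birational morphism $\pi\colon Y\to X$ from a normal $\mathbb{Q}$-factorial variety, together with an effective $\mathbb{R}$-divisor $B_Y$, such that $(Y,B_Y)$ is dlt and $K_Y+B_Y=\pi^*(K_X+B)$ (a dlt blow-up). Because $K_X+B$ is not nef, there is a curve $C_0\subset X$ with $(K_X+B)\cdot C_0<0$; taking the strict transform $\widetilde{C}_0\subset Y$ and applying the projection formula, $(K_Y+B_Y)\cdot\widetilde{C}_0=(K_X+B)\cdot C_0<0$, so $K_Y+B_Y$ is not nef either. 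Thus it suffices to produce a $(K_Y+B_Y)$-negative rational curve on the dlt pair $(Y,B_Y)$.

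The descent step is the easy end, so I would record it next. Suppose $\Gamma\subset Y$ is a rational curve with $(K_Y+B_Y)\cdot\Gamma<0$. If $\pi(\Gamma)$ is a point, then $\Gamma\subset\Exc(\pi)$ and the projection formula gives $(K_Y+B_Y)\cdot\Gamma=\pi^*(K_X+B)\cdot\Gamma=0$, a contradiction. Hence $\pi(\Gamma)$ is a curve; it is rational, being the image of a rational curve, and the projection formula yields $(K_X+B)\cdot\pi(\Gamma)<0$ up to the positive degree of $\pi|_{\Gamma}$. This produces the required rational curve $C=\pi(\Gamma)$ on $X$.

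The heart of the argument is the existence of $\Gamma$, that is, the Cone Theorem for the dlt pair $(Y,B_Y)$: the $(K_Y+B_Y)$-negative part of $\overline{NE}(Y)$ is spanned by classes of rational curves of bounded length. Since $K_Y+B_Y$ is not nef, some class in $\overline{NE}(Y)$ has negative intersection with it, forcing at least one such extremal rational curve $\Gamma$ to satisfy $(K_Y+B_Y)\cdot\Gamma<0$. The rational curves themselves are produced by Mori's bend-and-break, deforming curves through suitable points and degenerating them into rational components, while boundedness of the length keeps the construction under control.

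The main obstacle is exactly that $(Y,B_Y)$ is only dlt, not klt, so the classical bend-and-break and base-point-free technology does not apply verbatim; the reduced boundary $\lfloor B_Y\rfloor$ must be accommodated. A naive perturbation such as replacing $B_Y$ by $B_Y-\epsilon\lfloor B_Y\rfloor$ does give a klt pair with $K_Y+B_Y-\epsilon\lfloor B_Y\rfloor$ still non-nef, but the rational curve supplied by the klt Cone Theorem need not be negative against $K_Y+B_Y$ itself, so this shortcut fails. I would therefore invoke the Cone Theorem for log canonical pairs (via the theory of quasi-log schemes), which handles the reduced boundary directly and is the genuinely nontrivial ingredient behind this otherwise elementary-looking lemma.
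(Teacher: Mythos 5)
Your argument is correct, and at bottom it rests on exactly the same nontrivial input as the paper: the cone theorem for log canonical pairs (the paper simply cites \cite[Proposition 3.21]{fujino3} and \cite[Section 18]{fufu}, i.e.\ the quasi-log/lc cone theorem you invoke in your last paragraph). The difference is that you wrap this in a dlt-blow-up reduction, and that detour is logically superfluous: once you are willing to use the cone theorem for log canonical pairs, you can apply it directly to $(X,B)$ and read off the negative rational curve with no modification, no descent step, and no projection-formula bookkeeping. The reduction to a $\mathbb Q$-factorial dlt pair $(Y,B_Y)$ would only earn its keep if it let you fall back on more classical technology (the klt cone theorem of Kawamata--Koll\'ar--Mori--Reid), and, as you correctly observe yourself, the naive perturbation $B_Y-\epsilon\lfloor B_Y\rfloor$ does not accomplish this, since the resulting extremal rational curves need not be $(K_Y+B_Y)$-negative. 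So your steps 1--3 (dlt blow-up via \cite{bchm}, non-nefness of the pullback, and descent of the rational curve, all of which are carried out correctly) can simply be deleted; what remains is the paper's one-line proof. One small merit of your write-up is that it makes explicit why the lemma is \emph{not} an immediate consequence of the klt cone theorem and genuinely requires the log canonical version --- a point the paper's proof leaves implicit by flagging that \cite{kawamata3} suffices only in the klt case.
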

\begin{proof} 
It is obvious by the cone theorem for log canonical pairs. 
See, for example, \cite[Proposition 3.21]{fujino3} and 
\cite[Section 18]{fufu}. 
We note that \cite{kawamata3} is sufficient when $(X, B)$ is 
klt. 
\end{proof}

By Lemma \ref{lem32}, we obtain the next lemma. 

\begin{lem}\label{lem322}
Let $f:X\to S$ be a proper surjective 
morphism between projective 
varieties. 
Let $B$ be an effective $\mathbb R$-divisor on $X$ 
such that $(X, B)$ is log canonical. 
Assume that 
$K_X+B$ is $f$-nef and 
$S$ contains no rational 
curves. 
Then $K_X+B$ is nef. 
\end{lem}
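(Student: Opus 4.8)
The plan is to argue by contradiction, using Lemma \ref{lem32} to produce a problematic rational curve and then derive a contradiction by looking at where that curve goes under $f$. So suppose $K_X+B$ is not nef. Since $(X,B)$ is log canonical, Lemma \ref{lem32} immediately supplies a rational curve $C\subset X$ with $(K_X+B)\cdot C<0$. Everything then reduces to analyzing the image $f(C)\subset S$, which is either a point or a curve, and ruling out both possibilities.

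First I would dispose of the case $\dim f(C)=0$. Here $C$ is contracted by $f$, i.e.\ $C$ lies in a fiber of $f$. By hypothesis $K_X+B$ is $f$-nef, so its intersection with any curve contracted by $f$ is nonnegative; in particular $(K_X+B)\cdot C\geq 0$, which directly contradicts the inequality $(K_X+B)\cdot C<0$ coming from Lemma \ref{lem32}.

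Next I would handle the case $\dim f(C)=1$. The key observation is that the image of a rational curve is again a rational curve: since $C$ is rational, there is a dominant morphism from $\mathbb P^1$ onto $C$ (through its normalization), and composing with $f|_C$ gives a nonconstant morphism $\mathbb P^1\to f(C)$. Hence the normalization of $f(C)$ is dominated by $\mathbb P^1$ and therefore has genus zero, so $f(C)$ is a rational curve on $S$. But $S$ is assumed to contain no rational curves, a contradiction. This is precisely where the hypothesis on $S$ is used, and it is the step that carries the real content of the lemma.

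Since both cases are impossible, the assumption that $K_X+B$ is not nef is untenable, and therefore $K_X+B$ is nef. I expect the only genuinely substantive point to be the assertion that $f(C)$ is rational when it is one-dimensional; this is the mechanism by which ``no rational curves on $S$'' (the simple fact highlighted in the introduction) propagates back to force nefness on $X$. The rest is a routine separation into the contracted and non-contracted cases, with $f$-nefness handling the former for free.
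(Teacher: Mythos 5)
Your proof is correct and follows essentially the same route as the paper: apply Lemma \ref{lem32} to get a rational curve $C$ with $(K_X+B)\cdot C<0$, use $f$-nefness to rule out $C$ being contracted, and then observe that $f(C)$ would be a rational curve on $S$, contradicting the hypothesis. You merely spell out the (standard) fact that the image of a rational curve is rational, which the paper leaves implicit.
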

\begin{proof}
If $K_{X}+B$ is not nef, 
then there exists a rational 
curve $C$ on $X$ such that 
$(K_{X}+B)\cdot C<0$ by Lemma \ref{lem32}. 
Since $K_{X}+B$ is $f$-nef, $f(C)$ is not a point. 
On the other hand, $S$ contains no rational 
curves by the assumption. 
It is a contradiction. 
Therefore, $K_{X}+B$ is nef. 
\end{proof}

Therefore, the following 
lemma is obvious by Definition \ref{model} and 
Lemma \ref{lem322}. 

\begin{lem}\label{lem33} 
Let $f:X\to S$ be a proper 
surjective morphism between projective varieties. 
Let $B$ be an effective 
$\mathbb R$-divisor on $X$ such that 
$(X, B)$ is klt. 
Let $(X', B')$ be a log terminal 
model of $(X, B)$ over $S$. 
Assume that 
$S$ contains no rational curves. 
Then $(X', B')$ is a log terminal model 
of $(X, B)$.  
\end{lem}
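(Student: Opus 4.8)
The plan is to observe that the only gap between being a log terminal model \emph{over} $S$ and being a log terminal model (in the absolute sense, i.e.\ the case of Definition \ref{model} with $S$ a point) lies in the nefness condition, and then to close that gap by invoking Lemma \ref{lem322}. Reading off Definition \ref{model}, the two notions share conditions (i), (ii), and (iv) verbatim: these concern only the birational map $\phi:X\dashrightarrow X'$, the singularities of $(X',B')$, and the discrepancies $a(E,X,B)<a(E,X',B')$, none of which references the base $S$. The sole difference is condition (iii), where the relative version demands that $K_{X'}+B'$ be nef over $S$, whereas the absolute version demands that $K_{X'}+B'$ be nef outright. Thus it suffices to upgrade $f'$-nefness to nefness, where $f':X'\to S$ denotes the structure morphism of $X'$ over $S$.

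First I would record that $f':X'\to S$ is a proper surjective morphism between projective varieties, so that Lemma \ref{lem322} is applicable. Since $S$ is projective and $X'$ is projective over $S$ by the hypothesis in Definition \ref{model}, the variety $X'$ is projective and $f'$ is proper. Surjectivity follows because $f=f'\circ \phi$ as rational maps over $S$: restricting to the dense open locus where $\phi$ is defined shows that the image of $f'$ contains a dense subset of $S$, and as that image is closed (by properness), $f'$ is surjective.

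Next I would apply Lemma \ref{lem322} to $f':X'\to S$. The pair $(X',B')$ is klt by condition (ii), in particular log canonical; $K_{X'}+B'$ is nef over $S$ by condition (iii); and $S$ contains no rational curves by assumption. Lemma \ref{lem322} therefore yields that $K_{X'}+B'$ is nef. Combined with conditions (i), (ii), and (iv), which carry over unchanged from the relative setting, this shows that $(X',B')$ satisfies all four requirements of Definition \ref{model} to be a log terminal model of $(X,B)$ in the absolute sense.

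I do not anticipate a serious obstacle: the content is genuinely a one-line deduction from Lemma \ref{lem322}, which is exactly why the statement is advertised as obvious. The only point demanding a moment's care is the bookkeeping in the second paragraph, namely confirming that the induced morphism $f'$ is proper and surjective so that Lemma \ref{lem322} may legitimately be invoked; but this is immediate once one notes that $X'$ is projective over $S$ and that $\phi$ is birational.
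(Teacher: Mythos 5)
Your proposal is correct and is exactly the argument the paper intends: the paper dismisses this lemma as ``obvious by Definition \ref{model} and Lemma \ref{lem322}'', and your write-up simply spells out that the only condition distinguishing the relative from the absolute notion is nefness of $K_{X'}+B'$, which Lemma \ref{lem322} upgrades since $S$ contains no rational curves. The extra bookkeeping you include (properness and surjectivity of $X'\to S$) is correct and harmless.
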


We give an easy consequence of the main theorem 
of \cite{bchm}. 

\begin{thm}[Existence of log terminal models]\label{thm1} 
Let $X$ be a normal projective 
variety and 
$B$ an effective $\mathbb R$-divisor 
on $X$ such that $(X, B)$ is klt. 
Assume that $X$ has maximal Albanese 
dimension. 
Then $(X, B)$ has a log terminal model.  
\end{thm}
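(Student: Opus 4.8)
The plan is to exploit maximal Albanese dimension to produce a generically finite morphism from $X$ onto a base $S$ that carries no rational curves, to run the relative minimal model program of \cite{bchm} over $S$, and then to invoke Lemma \ref{lem33} to convert the resulting relative model into an absolute one. The conceptual point is that the only obstruction to nefness is a rational curve (Lemma \ref{lem32}), and such curves are excluded on any variety sitting inside an abelian variety.

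First I would set up the base. Since $(X, B)$ is klt, $X$ has only rational singularities, so the Albanese map $\alpha\colon X\to \Alb(X)$ is an honest morphism rather than merely a rational map. As $X$ has maximal Albanese dimension, $\alpha$ is generically finite onto its image. I put $S:=\alpha(X)\subseteq \Alb(X)$ and let $f\colon X\to S$ be the induced surjective morphism. Because $S$ is a subvariety of an abelian variety, any rational curve contained in $S$ would be a rational curve on $\Alb(X)$; since abelian varieties carry no rational curves, $S$ contains no rational curves. This is exactly the hypothesis required by Lemma \ref{lem33}.

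Next I would apply \cite{bchm} relatively over $S$. The crucial observation is that $f$ is generically finite, so its generic fibre is zero-dimensional; consequently $B$ is big over $S$ (relative bigness is automatic on a zero-dimensional fibre) and $K_X+B$ is pseudo-effective over $S$. Under these hypotheses the relative form of the main theorem of \cite{bchm} yields a log terminal model $(X',B')$ of $(X,B)$ over $S$ in the sense of Definition \ref{model}; in particular $K_{X'}+B'$ is nef over $S$. Finally, since $S$ contains no rational curves, Lemma \ref{lem33} immediately upgrades $(X',B')$ to a log terminal model of $(X,B)$ in the absolute sense, which proves the theorem.

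The step I expect to be the main obstacle is the middle one: checking that the hypotheses of \cite{bchm} are genuinely met in the relative setting, namely that maximal Albanese dimension (together with rational singularities coming from klt) really produces a morphism onto a rational-curve-free base over which $B$ is big, so that a relative log terminal model exists. Once this is in place, the passage from the relative to the absolute statement costs nothing, being entirely packaged into the negativity of rational curves via Lemmas \ref{lem32}, \ref{lem322}, and \ref{lem33}.
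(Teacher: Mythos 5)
Your proposal is correct and follows essentially the same route as the paper: factor the Albanese map through $X$ using rational singularities of the klt pair, note that $f:X\to S\subseteq\Alb(X)$ is generically finite onto a base with no rational curves, apply \cite[Theorem 1.2]{bchm} over $S$, and conclude with Lemma \ref{lem33}. Your added remark that generic finiteness makes the relative bigness and pseudo-effectivity hypotheses of \cite{bchm} automatic is a useful detail the paper leaves implicit.
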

\begin{proof}
Let $\pi:\overline X\to X$ be a resolution 
and $\alpha:\overline X\to \Alb (\overline X)$ the Albanese mapping 
of $\overline X$. 
Since $X$ has only rational singularities, $\overline X\to S=\alpha (\overline 
X)$ decomposes as 
$$
\alpha:\overline X\overset{\pi}\longrightarrow 
X\overset{f}\longrightarrow S. 
$$ 
See, for example, \cite[Lemma 2.4.1]{bs}. 
Since $X$ has maximal Albanese dimension, 
$f:X\to S$ is generically finite. 
By \cite[Theorem 1.2]{bchm}, 
there exists a log terminal model 
$f:(X', B')\to S$ of 
$(X, B)$ over $S$. 
By Lemma \ref{lem33}, $(X', B')$ is a log terminal 
model of $(X, B)$. 
\end{proof}

The following corollary is obvious by Theorem \ref{thm1}. 

\begin{cor}[Minimal models]\label{cor12}
Let $X$ be a smooth projective variety with maximal 
Albanese dimension. 
Then $X$ has a minimal model. 
\end{cor}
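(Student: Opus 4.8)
The plan is to apply Theorem \ref{thm1} to the pair $(X, 0)$ and then upgrade the resulting log terminal model to a minimal model in the sense of Conjecture \ref{conj1}. Since $X$ is smooth, the pair $(X, 0)$ is klt (indeed terminal), and by hypothesis $X$ has maximal Albanese dimension, so Theorem \ref{thm1} applies. Let $\phi:X\dashrightarrow X'$ be the resulting birational map and $(X', B')$ the log terminal model of $(X, 0)$; since $B'=\phi_*0=0$, we obtain a model $(X', 0)$.

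First I would read off from Definition \ref{model} the properties of $X'$: by (i) the map $\phi^{-1}$ contracts no divisors, so $\phi$ is a birational contraction; by (ii) $X'$ is $\mathbb Q$-factorial and klt; and by (iii) $K_{X'}$ is nef. It then remains to promote the klt condition to the terminal condition required in Conjecture \ref{conj1}~(ii). Here the smoothness of $X$ is essential: every divisor $F$ exceptional over $X$ satisfies $a(F, X, 0)\geq 1$, while every $\phi$-exceptional prime divisor $E$ on $X$ satisfies $a(E, X, 0)=0$.

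The key step is to check that all discrepancies $a(F, X', 0)$ for divisors $F$ exceptional over $X'$ are strictly positive. Such an $F$ is either a $\phi$-exceptional prime divisor on $X$, in which case condition (iv) of Definition \ref{model} gives $a(E, X', 0)>a(E, X, 0)=0$, or else it is exceptional over both $X$ and $X'$. In the latter case I would pass to a common log resolution $p:W\to X$, $q:W\to X'$ and compare $p^*K_X$ with $q^*K_{X'}$; since $K_{X'}$ is nef and $\phi$ is a birational contraction, the negativity lemma yields $a(F, X', 0)\geq a(F, X, 0)\geq 1$. Hence every exceptional divisor over $X'$ has positive discrepancy, so $X'$ is $\mathbb Q$-factorial terminal with $K_{X'}$ nef, that is, a minimal model of $X$.

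The one point requiring genuine care is this discrepancy comparison for divisors exceptional over both $X$ and $X'$; once the negativity lemma is invoked, the terminality of $X'$ — and hence the whole corollary — follows formally from Theorem \ref{thm1} and Definition \ref{model}. I expect this negativity argument to be the only nontrivial ingredient, everything else being a direct reading of the definitions.
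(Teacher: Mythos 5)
Your proposal is correct and follows exactly the paper's route: the paper simply declares the corollary ``obvious by Theorem \ref{thm1}'', i.e.\ one applies Theorem \ref{thm1} to the pair $(X,0)$. You additionally supply the standard verification (condition (iv) of Definition \ref{model} plus the negativity lemma on a common resolution, cf.\ \cite[3.52]{km}) that the resulting $\mathbb Q$-factorial klt model of the smooth, hence terminal, variety $X$ is in fact terminal --- a detail the paper leaves implicit but does rely on later, e.g.\ in Corollary \ref{cor25}.
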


\subsection{Abundance theorem}
Let us consider the abundance theorem for 
maximal Albanese dimensional varieties. 

\begin{thm}[Abundance theorem]\label{thm13}
Let $X$ be a projective variety with only 
canonical singularities. Assume that 
$X$ has maximal Albanese dimension. 
If $K_X$ is nef, then $K_X$ is semi-ample. 
\end{thm}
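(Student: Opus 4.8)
The plan is to reduce the absolute abundance statement to a relative statement over the Albanese image, and then to upgrade relative semi-ampleness to absolute semi-ampleness using the absence of rational curves, exactly in the spirit of Lemmas \ref{lem32} and \ref{lem33}. First I would fix a resolution $\pi:\overline X\to X$ and, as in the proof of Theorem \ref{thm1}, factor the Albanese map as $\alpha:\overline X\overset{\pi}\longrightarrow X\overset{f}\longrightarrow S=\alpha(\overline X)$, where $S$ is a subvariety of the Abelian variety $\Alb(\overline X)$. Since $X$ has maximal Albanese dimension, $f:X\to S$ is generically finite, and since $S$ lies inside an Abelian variety it contains no rational curves. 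As $X$ has only canonical singularities, $(X,0)$ is klt, and $K_X$ is nef by hypothesis; in particular $K_X$ is $f$-nef.

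Next I would prove that $K_X$ is $f$-semi-ample. Since $f$ is generically finite, its generic fibre is zero-dimensional, so every $\mathbb R$-divisor on $X$ is $f$-big; in particular, for $a\geq 2$ the divisor $aK_X-(K_X+0)=(a-1)K_X$ is $f$-nef and $f$-big. The relative base-point-free theorem for the klt pair $(X,0)$ then shows that the $f$-nef divisor $K_X$ is $f$-semi-ample, yielding a morphism $\phi:X\to T$ over $S$ together with a $\mathbb Q$-divisor $H$ on $T$, ample over $S$, with $K_X\sim_{\mathbb Q}\phi^*H$. This step is essentially formal, the genericity of $f$ being exactly what makes the relative bigness hypothesis automatic even along the positive-dimensional fibres.

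Finally I would upgrade $f$-semi-ampleness to genuine semi-ampleness, and this is where I expect the main obstacle to lie. Since $\phi$ is surjective and $K_X\sim_{\mathbb Q}\phi^*H$ is nef, $H$ is nef on $T$, and it remains to prove that $H$ is semi-ample. Every curve contracted by $T\to S$ meets $H$ positively because $H$ is ample over $S$, while every curve dominating its image in $S$ cannot be rational, since $S$ contains no rational curves; thus the curves that could obstruct semi-ampleness are governed entirely by the rational-curve-free geometry of $S$. I would therefore conclude by invoking the no-rational-curve mechanism underlying Lemmas \ref{lem32} and \ref{lem33}, combined with the semi-ampleness criteria of \cite{fujino1}, to descend $H$ to a semi-ample divisor and hence to deduce that $K_X=\phi^*H$ is semi-ample. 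The passage from relative to absolute semi-ampleness is precisely the point at which the hypothesis on the Albanese image is used, going beyond the nef-ness already secured in Lemma \ref{lem33}, and it is the step I expect to require the most care.
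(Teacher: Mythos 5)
Your first two steps are fine: the factorization $\alpha=f\circ\pi$ through $S=\alpha(\overline X)$ is exactly the one used in the paper's proof of Theorem \ref{thm1}, and since $f$ is generically finite every divisor is $f$-big, so the relative base point free theorem does give $K_X\sim_{\mathbb Q}\phi^*H$ with $H$ ample over $S$. But the third step, which you yourself flag as the crux, has a genuine gap: the ``no rational curves'' mechanism of Lemmas \ref{lem32} and \ref{lem33} is a cone-theorem statement and can only ever produce \emph{nefness}, never semi-ampleness. Semi-ampleness is a statement about sections of $|mH|$, and intersection numbers with curves cannot detect base loci (a nef non-semi-ample divisor and a semi-ample one can have identical intersection behaviour). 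Moreover $K_X$ is already assumed nef, so descending nefness from the relative to the absolute setting buys you nothing here. Note also that your reduction can be vacuous: when $f$ is finite (e.g.\ $X$ a smooth subvariety of an Abelian variety), $\phi$ is the identity, $T=X$ and $H=K_X$, so ``prove $H$ is semi-ample on $T$'' is literally the original problem. The paper's Theorem \ref{thm38} does upgrade positivity using the absence of rational curves, but only \emph{after} semi-ampleness has been secured by the base point free theorem, which requires $K_X+B$ to be big --- an assumption you do not have.

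The content your proposal is missing is the actual production of sections, which the paper obtains from the special geometry of subvarieties of Abelian varieties rather than from rational curves: first $\kappa(X,K_X)\geq 0$ because subvarieties of Abelian varieties have non-negative Kodaira dimension (Lemma \ref{lem35}, via Ueno); then $\kappa(X,K_X)=0$ forces $X$ to be an Abelian variety by Kawamata's characterization (Proposition \ref{prop36}), so that $\nu>0$ implies $\kappa>0$; then Kawamata's argument (Lemma \ref{lem14}) upgrades this to $\nu(X,K_X)=\kappa(X,K_X)$; and finally the equality $\nu=\kappa$ together with nefness yields semi-ampleness by \cite[Theorem 1.1]{kawamata2}. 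None of these inputs can be replaced by the cone-theorem argument you propose, so the proof as written does not close.
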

\begin{proof}
By Lemma \ref{lem35}, we have 
$\kappa (X, K_X)\geq 0$. 
Therefore, if $\nu(X, K_X)=0$, then 
$\kappa (X, K_X)=\nu(X, K_X)=0$. 
Thus, $X$ is an Abelian variety by Proposition \ref{prop36}. 
In particular, $K_X$ is semi-ample. 
By Lemma \ref{lem14}, it is sufficient 
to see that $\nu(X, K_X)>0$ implies 
$\kappa (X, K_X)>0$ because $\nu(X, K_X)=\kappa (X, K_X)$ 
means $K_X$ is semi-ample by \cite[Theorem 1.1]{kawamata2} and 
\cite[Corollary 2.5]{fujino2}. 
See also Remark \ref{re-new} below.  
By Proposition \ref{prop36}, 
we know that $\nu(X, K_X)>0$ implies $\kappa (X, K_X)>0$. 
We finish the proof.    
\end{proof}

\begin{lem}\label{lem35}
Let $X$ be a projective variety with only canonical singularities. 
Assume that 
$X$ has maximal Albanese dimension. 
Then $\kappa (X, K_X)\geq 0$. 
\end{lem}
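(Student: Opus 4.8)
The plan is to show that $\kappa(X, K_X) \geq 0$, which is equivalent to showing that some pluricanonical system $|mK_X|$ is nonempty for a positive integer $m$. Since $X$ has only canonical singularities, plurigenera are birational invariants, so I may pass to a smooth resolution $\pi: \overline X \to X$ and instead prove $\kappa(\overline X, K_{\overline X}) \geq 0$; by the definition of canonical singularities we have $K_{\overline X} = \pi^* K_X + E$ with $E$ effective and $\pi$-exceptional, so $H^0(\overline X, \mathcal{O}(mK_{\overline X})) \cong H^0(X, \mathcal{O}(mK_X))$ and the two Kodaira dimensions agree. Thus the problem reduces to establishing the non-negativity of the Kodaira dimension for a smooth projective variety of maximal Albanese dimension.

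The key tool I would invoke is the structure of the Albanese map $\alpha: \overline X \to \Alb(\overline X)$, which by hypothesis is generically finite onto its image. The classical result here is due to the theory of generic vanishing, going back to Green--Lazarsfeld and Ein--Lazarsfeld: for a smooth projective variety of maximal Albanese dimension, the Kodaira dimension is non-negative. The cleanest route is to use the fact that for such $X$ the cotangent sheaf is generically globally generated (as recorded in the Remark following the definition), so that the canonical sheaf $\omega_{\overline X} = \det \Omega^1_{\overline X}$ is a quotient of wedge powers of a trivial bundle at the generic point; more robustly, one appeals to Ein--Lazarsfeld's theorem that $\chi(\overline X, \omega_{\overline X}) \geq 0$ together with the generic vanishing theorem, which forces $H^0(\overline X, \omega_{\overline X}) \neq 0$ unless the cohomology is forced to vanish by a fibration structure. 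In the maximal Albanese dimensional case the standard conclusion is precisely $\kappa \geq 0$.

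The main steps, in order, are as follows. First, reduce from $X$ to its resolution $\overline X$ using invariance of plurigenera under canonical singularities. Second, invoke the Albanese map and its generic finiteness coming from maximal Albanese dimension. Third, apply the generic vanishing package to deduce $h^0(\overline X, \omega_{\overline X}^{\otimes m}) > 0$ for some $m > 0$, equivalently $\kappa(\overline X, K_{\overline X}) \geq 0$. Fourth, translate back to obtain $\kappa(X, K_X) \geq 0$.

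The step I expect to be the main obstacle is the third one: extracting non-negativity of the Kodaira dimension from the generic vanishing machinery. While $\chi(\omega_{\overline X}) \geq 0$ is a clean consequence of Ein--Lazarsfeld, this alone does not immediately produce a nonzero global pluricanonical section, since cancellation among the higher cohomology groups must be ruled out; one genuinely needs the generic vanishing theorem to control $h^i(\omega_{\overline X} \otimes P)$ for generic $P \in \Pic^0$, and then a suitable specialization or deformation argument to conclude $h^0 > 0$. Because the statement is offered as a lemma whose proof the author may well wish to dispatch by direct citation of the literature on varieties of maximal Albanese dimension (for instance the results surveyed in \cite{ueno}), I would present the argument as an application of generic vanishing with the reduction to $\overline X$ made explicit, and cite the relevant non-negativity-of-Kodaira-dimension result rather than reprove generic vanishing from scratch.
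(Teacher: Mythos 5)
Your reduction to the smooth case is exactly the paper's first step, but from there you take a genuinely different and much heavier route. The paper's proof is two citations to Ueno: the Albanese map $\alpha:\overline X\to \Alb(\overline X)$ is generically finite onto its image $Y=\alpha(\overline X)$, which is a subvariety of an Abelian variety and hence has $\kappa(Y)\geq 0$ by \cite[Lemma 10.1]{ueno}; since Kodaira dimension does not decrease under a generically finite dominant map (\cite[Theorem 6.10]{ueno}), $\kappa(\overline X)\geq \kappa(Y)\geq 0$. No generic vanishing is needed. Your proposed appeal to the Green--Lazarsfeld/Ein--Lazarsfeld package can be made to work (this is essentially the Chen--Hacon circle of results), but as you yourself note, $\chi(\omega_{\overline X})\geq 0$ plus generic vanishing does not by itself produce a pluricanonical section --- the case $\chi(\omega_{\overline X})=0$ requires a genuinely delicate supplementary argument --- so as written your step three is a deferral to the literature rather than a proof; that is acceptable in spirit (the paper also proves the lemma by citation) but it invokes far stronger machinery than necessary. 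Note also that the simplest complete argument is already contained in your own parenthetical remark: since $\Omega^1_{\overline X}$ is generically generated by global sections, one can choose $n=\dim \overline X$ global $1$-forms whose wedge is nonzero at the generic point, giving a nonzero element of $H^0(\overline X,\omega_{\overline X})$; thus $p_g(\overline X)\geq 1$ and $\kappa(\overline X,K_{\overline X})\geq 0$ directly, with no Euler characteristic or vanishing theorem at all. Had you pursued that line instead of pivoting to Ein--Lazarsfeld, your proof would have been both complete and more elementary than the citation-based one in the paper.
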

\begin{proof}
Since $X$ has only canonical singularities, we can 
assume that $X$ is smooth by replacing $X$ with 
its resolution. 
Then this lemma is obvious by the basic properties of 
the Kodaira dimension (cf.~\cite[Theorem 6.10]{ueno}). 
Note that every subvariety of an 
Abelian variety has non-negative Kodaira dimension 
(cf.~\cite[Lemma 10.1]{ueno}). 
\end{proof}

\begin{prop}\label{prop36} 
Let $X$ be a projective variety with only 
canonical singularities. 
Assume that 
$X$ has maximal Albanese dimension, 
$K_X$ is nef, and $\kappa (X, K_X)=0$. 
Then $X$ is an Abelian variety. In particular, 
$X$ is smooth and 
$K_X\sim 0$. 
\end{prop}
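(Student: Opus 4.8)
The plan is to reduce the proposition to the assertion that the Albanese map of a resolution of $X$ is birational, and then to upgrade the resulting birational morphism $X\to A$ onto the Abelian variety to an isomorphism.

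First I would take a resolution $\pi\colon \overline X\to X$. Since $X$ has only canonical singularities we have $\kappa(\overline X, K_{\overline X})=\kappa(X,K_X)=0$, and $\overline X$ has maximal Albanese dimension. Writing $A=\Alb(\overline X)$ and $\alpha\colon \overline X\to A$ for the Albanese map, Kawamata's characterization of Abelian varieties applies: a smooth projective variety with $\kappa=0$ whose Albanese map is generically finite has connected Albanese fibres, hence birational Albanese map (cf.\ \cite{ueno}). Thus $\alpha$ is birational onto $A$ and $\dim A=\dim X$. Exactly as in the proof of Theorem \ref{thm1}, the rational singularities of $X$ force $\alpha$ to factor as $\overline X\overset{\pi}\longrightarrow X\overset{f}\longrightarrow A$; since $\alpha$ is birational, so is the morphism $f\colon X\to A$.

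Next I would show $K_X\sim_{\mathbb Q}0$. By Lemma \ref{lem35} together with the hypothesis $\kappa(X,K_X)=0$, there is a positive integer $m$ with $mK_X$ Cartier and an effective divisor $E\in|mK_X|$. Because $X$ is canonical and $A$ is smooth with $f$ birational, $f_*\mathcal O_X(mK_X)=\mathcal O_A(mK_A)=\mathcal O_A$, so the section cutting out $E$ is the pullback of a nowhere-vanishing section on $A$; equivalently $f_*E=0$, so $E$ is $f$-exceptional. On the other hand $E\sim mK_X$ is nef, hence $f$-nef, so the negativity lemma (cf.\ \cite{km}) gives $E\le 0$. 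Combined with $E\ge 0$ this yields $E=0$, that is, $mK_X\sim 0$.

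Finally I would deduce that $f$ is an isomorphism. Since $K_X\sim_{\mathbb Q}0\sim_{\mathbb Q}f^*K_A$, the birational morphism $f$ is crepant. As $A$ is smooth, hence terminal, a crepant birational morphism onto $A$ contracts no divisor, so $f$ is small; and as $A$ is $\mathbb Q$-factorial, a small projective birational morphism onto $A$ must be an isomorphism (cf.\ \cite{km}). Alternatively, since $K_X$ is now $f$-trivial, any positive-dimensional fibre of $f$ would be covered by rational curves, contradicting that the Abelian variety $A$ contains no rational curves. Either way $X\cong A$, so $X$ is a smooth Abelian variety with $K_X\sim 0$, as claimed. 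The main obstacle is the first step: the input that the Albanese map is birational is precisely Kawamata's characterization theorem, and one must also verify that $\alpha$ descends to an honest morphism $f\colon X\to A$ rather than merely a rational map. Once $f$ is a birational morphism onto an Abelian variety, the remaining steps—vanishing of $mK_X$ via the negativity lemma and rigidity of small morphisms onto a $\mathbb Q$-factorial target—are routine.
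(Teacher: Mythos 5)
Your proof is correct and follows essentially the same route as the paper: Kawamata's characterization of Abelian varieties gives that the induced morphism $f:X\to \Alb(\overline X)$ is birational, and then the discrepancy comparison $K_X=f^*K_A+E$ with $E\geq 0$ exceptional and (by nefness of $K_X$ and triviality of $K_A$) nef forces $E=0$ and hence $f$ an isomorphism --- your detour through a member of $|mK_X|$ and its pushforward is just a rephrasing of this negativity-lemma step. One caveat: your ``alternative'' ending does not work as stated, since rational curves covering a positive-dimensional fibre of $f$ are contracted by $f$ and hence live on $X$, not on $A$, and nothing in the hypotheses excludes rational curves on $X$; keep the crepant-plus-terminal-plus-$\mathbb Q$-factorial argument (which is fine and is implicitly the paper's).
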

\begin{proof}
We know that 
$f:X\to S=\Alb (\overline X)$ is birational 
by \cite[Theorem 1]{kawamata1}, where 
$\overline X$ is a resolution of $X$ and 
$\Alb (\overline X)$ 
is the Albanese variety of $\overline X$. 
We can write 
$$
K_X=f^*K_S+E
$$ 
such that $E$ is effective and $\Supp E=\Exc (f)$. 
Since $K_X$ is nef and $K_S\sim 0$, we obtain $E=0$. 
This means that $f:X\to S$ is an isomorphism. 
\end{proof}

\begin{lem}[{cf.~\cite[Theorem 7.3]{kawamata2}}]\label{lem14}
Let $X$ be a projective variety with only canonical singularities. 
Assume that $X$ has maximal Albanese dimension and 
that $K_X$ is nef. 
If $\nu(X, K_X)>0$ implies $\kappa (X, K_X)>0$, 
then $\nu(X, K_X)=\kappa (X, K_X)$. 
\end{lem}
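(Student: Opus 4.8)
The plan is to prove the nontrivial inequality $\nu(X,K_X)\le \kappa(X,K_X)$, since the reverse inequality $\nu(X,K_X)\ge \kappa(X,K_X)$ is already recorded in the definition of the two dimensions. First I would dispose of the degenerate case: if $\nu(X,K_X)=0$, then $0\le \kappa(X,K_X)\le \nu(X,K_X)=0$ by Lemma \ref{lem35} and the definition, so $\nu(X,K_X)=\kappa(X,K_X)=0$ and there is nothing more to prove. Hence I may assume $\nu(X,K_X)>0$, and the hypothesis of the lemma then yields $\kappa(X,K_X)=:\kappa>0$. In particular the Iitaka fibration attached to $K_X$ is a nontrivial rational fibration $\phi:X\dashrightarrow Z$ with $\dim Z=\kappa$.

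Next I would study a general fiber $F$ of $\phi$, taken as the closure of a general fiber over the open locus where $\phi$ is a morphism (or on a suitable resolution realizing $\phi$). Three facts are needed about $F$. Since the general fiber sweeps out $X$, it cannot be contained in the proper closed subset $\pi(\Exc(\pi)\cup \Exc(\beta))$ appearing in Lemma \ref{lem23}, so $F$ has maximal Albanese dimension. A general fiber of a dominant morphism from a variety with canonical singularities again has canonical singularities, so $F$ is canonical. Finally, $K_X|_F=K_F$ by adjunction (the general fiber has trivial normal bundle), $K_X|_F$ is nef as the restriction of the nef divisor $K_X$, and by the defining property of the Iitaka fibration $\kappa(F,K_F)=\kappa(F,K_X|_F)=0$. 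Thus $F$ meets every hypothesis of Proposition \ref{prop36}, which forces $F$ to be an Abelian variety with $K_F\sim 0$. Consequently $K_X|_F\equiv 0$ for a general fiber $F$.

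It remains to convert this fiberwise numerical triviality into the bound $\nu(X,K_X)\le\kappa$. I would cut $X$ by $n-\kappa-1$ general members of a very ample linear system $|H|$, where $n=\dim X$, obtaining a subvariety $S$ of dimension $\kappa+1$ on which $\phi$ restricts to a dominant map $S\dashrightarrow Z$ whose general fibers are curves $C$ lying inside the fibers $F$; hence $K_X\cdot C=0$ and these curves cover $S$. If $(K_X|_S)^{\kappa+1}>0$, then the nef divisor $K_X|_S$ would be big, so $K_X|_S\equiv A+E$ with $A$ ample and $E$ effective; restricting to a general such curve $C$ not contained in $E$ gives $K_X\cdot C=A\cdot C+E\cdot C>0$, a contradiction. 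Therefore $K_X^{\kappa+1}\cdot H^{n-\kappa-1}=(K_X|_S)^{\kappa+1}=0$, which means $\nu(X,K_X)\le \kappa$, and combined with $\nu(X,K_X)\ge\kappa$ we conclude $\nu(X,K_X)=\kappa(X,K_X)$.

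The main obstacle I anticipate is the passage in the last paragraph from ``$K_X|_F\equiv 0$ on a general fiber'' to the numerical bound $\nu(X,K_X)\le\kappa$: the hyperplane-cutting computation has to be arranged so that the covering curves genuinely have $K_X$-degree zero and avoid the fixed part $E$, which is exactly where the nefness of $K_X$ and the properness of the indeterminacy and singular loci enter. A secondary technical point is verifying that a general fiber $F$ has canonical singularities and satisfies the adjunction identity $K_X|_F=K_F$; this is standard, but it must be secured by choosing the right birational model of the Iitaka fibration before invoking Proposition \ref{prop36}.
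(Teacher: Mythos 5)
Your overall architecture is the right one, and it is in fact the skeleton of the argument the paper invokes: the paper's proof is a two-line deferral to Kawamata's proof of \cite[Theorem 7.3]{kawamata2}, observing only that the general fiber $W$ of the Iitaka fibration has maximal Albanese dimension (Lemma \ref{lem23}) and $\kappa(W)=0$, so that its minimal model $W_{min}$ is an Abelian variety with $K_{W_{min}}\sim 0$ by Proposition \ref{prop36}. Your reduction to $\nu\le\kappa$, the treatment of the $\nu=0$ case, and the final hyperplane-cutting step (nef $K_X$ restricted to a $(\kappa+1)$-dimensional general complete intersection $S$ is covered by $K_X$-trivial curves, hence not big, hence $K_X^{\kappa+1}\cdot H^{n-\kappa-1}=0$) are all sound.

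The genuine gap is the middle step, where you apply Proposition \ref{prop36} to the closure $F$ in $X$ of a general fiber of the \emph{rational} map $\phi$. None of the three hypotheses you need is automatic there, and the one you dismiss as a ``secondary technical point'' --- the adjunction $K_X|_F=K_F$ --- is precisely where the argument breaks. The indeterminacy locus of $\phi$ (essentially $\Bs|mK_X|$) can dominate $Z$, in which case \emph{every} fiber closure meets it; the fibers then do not form a base-point-free family, their normal sheaves are not trivial, adjunction fails, and $F$ need not have canonical singularities nor nef $K_F$. (Compare the projection of $\mathbb P^2$ from a point: the fibers are the lines through the point, and $K_{\mathbb P^2}|_L\ne K_L$.) The correct route, which is what Kawamata's cited proof carries out, is to pass to a resolution $\mu:X'\to X$ on which the Iitaka fibration is a morphism with smooth general fiber $F'$. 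There one has $\kappa(F')=0$, maximal Albanese dimension, and the nef divisor $D'=\mu^*K_X|_{F'}$ satisfies $D'\le K_{F'}$ (since $K_{X'}=\mu^*K_X+E$ with $E\ge 0$) and $\kappa(F',D')=0$; but $D'\ne K_{F'}$ in general, so Proposition \ref{prop36} cannot be quoted verbatim for $F'$ either. One must instead use the (good) minimal model of $F'$ --- an Abelian variety $A$ with $K_A\sim 0$ by Proposition \ref{prop36} --- write $K_{F'}$ as an effective divisor exceptional over $A$, and run a negativity-lemma argument to conclude that the nef class $D'$ is numerically trivial. Your proposal is missing exactly this comparison; once it is supplied, the rest of your write-up goes through.
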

\begin{proof}[Sketch of the proof]
The proof of \cite[Theorem 7.3]{kawamata2} works without any changes. 
We give some comments for the reader's convenience. 
We use the same notation as in the proof of \cite[Theorem 7.3]{kawamata2}. 
By Lemma \ref{lem23}, $W$ has maximal Albanese dimension. 
On the other hand, $\kappa (W)=0$ by the 
construction. 
By Proposition \ref{prop36}, 
$W_{min}$ is an Abelian variety. In particular, 
$K_{W_{min}}\sim 0$. 
\end{proof}

\begin{rem}
In the above proof of the abundance theorem, we did not use \cite[Theorem 8.2]{kawamata2.5}. It is because we have 
Proposition \ref{prop36}. 
\end{rem}

\begin{rem}\label{re-new}
On the assumption that 
$\nu(X, K_X)=\kappa (X, K_X)$, 
$K_X$ is semi-ample if and only if 
the canonical ring of $X$ is finitely generated. 
Therefore, by \cite{bchm}, 
we can check the semi-ampleness of 
$K_X$ without appealing \cite[Theorem 1.1]{kawamata2} 
and \cite[Corollary 2.5]{fujino2}. 
\end{rem}

By Corollary \ref{cor12} and Theorem \ref{thm13}, 
we obtain the main result of this short paper. 
 
\begin{cor}[Good minimal models]\label{cor25}
Let $X$ be a smooth projective variety with maximal Albanese dimension. 
Then $X$ has a good minimal model. 
This means that 
there is a normal projective variety $X'$ with only $\mathbb Q$-factorial 
terminal singularities such that 
$X'$ is birationally equivalent to $X$ and 
$K_{X'}$ is semi-ample. 
\end{cor}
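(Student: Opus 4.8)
The plan is to combine the two main results already established, namely Corollary~\ref{cor12} on the existence of minimal models and Theorem~\ref{thm13} on the abundance theorem, and to verify that the hypotheses of the latter survive the passage to a minimal model provided by the former.

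First I would invoke Corollary~\ref{cor12}. Since $X$ is a smooth projective variety with maximal Albanese dimension, it admits a minimal model; concretely, applying the argument behind that corollary to the klt (indeed terminal) pair $(X, 0)$ yields a normal projective variety $X'$ that is birationally equivalent to $X$, has only $\mathbb Q$-factorial terminal singularities, and satisfies $K_{X'}$ nef. At this point I would record why terminality is inherited: because $X$ is smooth and the minimal model program only increases discrepancies, condition~(iv) of Definition~\ref{model} keeps all discrepancies of $\phi$-exceptional divisors positive, so $X'$ remains terminal rather than merely klt.

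Next I would observe that $X'$ again has maximal Albanese dimension. This is immediate from the birational invariance of this notion noted in Section~2, since $X'$ is birationally equivalent to $X$. Moreover, terminal singularities are in particular canonical, so $X'$ is a projective variety with only canonical singularities that has maximal Albanese dimension and nef canonical divisor $K_{X'}$. These are precisely the hypotheses of Theorem~\ref{thm13}.

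Finally I would apply Theorem~\ref{thm13} to $X'$ and conclude that $K_{X'}$ is semi-ample. Thus $X'$ is a normal projective variety with only $\mathbb Q$-factorial terminal singularities, birationally equivalent to $X$, whose canonical divisor is semi-ample, i.e.\ a good minimal model of $X$. Since every step is a direct appeal to an already-proven statement, I do not expect a serious obstacle; the only point demanding care is the bookkeeping that the minimal model $X'$ retains both terminality and maximal Albanese dimension, which is exactly what allows the abundance theorem to be invoked in the last step.
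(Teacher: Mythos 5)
Your proposal is correct and follows exactly the route the paper intends: the paper derives this corollary directly by combining Corollary~\ref{cor12} (existence of a minimal model) with Theorem~\ref{thm13} (abundance), just as you do. Your additional bookkeeping---that condition~(iv) of Definition~\ref{model} preserves terminality from the smooth $X$, and that maximal Albanese dimension is a birational invariant so Theorem~\ref{thm13} applies to $X'$---is exactly the (unstated) verification the paper leaves to the reader.
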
 

\subsection{Iitaka--Viehweg's conjecture}
By combining Corollary \ref{cor25} with the main theorem 
of \cite{kawamata2.5}, we obtain the following theorem. 
We write it for the reader's convenience. 
For the details, see \cite{kawamata2.5}. 

\begin{thm}[{cf.~\cite[Theorem 1.1]{kawamata2.5}}]\label{thm36} 
Let $f:X\to S$ be a surjective morphism between 
smooth projective varieties with connected fibers 
and $\mathcal L$ a line bundle on $S$. 
Assume that the geometric generic fiber of $f$ 
has maximal Albanese dimension. 
Then the following assertions hold{\em{:}} 
\begin{itemize}
\item[(i)] There exists a positive integer $n$ such that 
$$
\kappa (S, \widehat {\det}(f_*(\omega^n_{X/S})))\geq \Var (f). 
$$
\item[(ii)] If $\kappa (S, \mathcal L)\geq 0$, then 
$$
\kappa (X, \omega_{X/S}\otimes f^*\mathcal L)\geq 
\kappa (X_\eta)+\max \{\kappa (S, \mathcal L), \Var (f)\}, 
$$ 
where $X_\eta$ is the generic fiber of $f$. 
\end{itemize}
\end{thm}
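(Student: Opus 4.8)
The plan is to reduce the two assertions to the main theorem of Kawamata \cite{kawamata2.5}, whose only conditional hypothesis is that the general fiber of $f$ admit a good minimal model. This hypothesis we supply via Corollary \ref{cor25}, so that once it is verified the statement follows by quoting Kawamata's result verbatim.

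First I would pass from the geometric generic fiber to a general closed fiber. Since $f:X\to S$ is a morphism of smooth projective varieties with connected fibers and the geometric generic fiber has maximal Albanese dimension, the general closed fiber $F=f^{-1}(s)$ is itself a smooth projective variety of maximal Albanese dimension. Indeed, maximal Albanese dimension is equivalent to the generic surjectivity of $H^0(F,\Omega^1_F)\otimes\mathcal O_F\to \Omega^1_F$ (see the remark following the first definition in Section~2), and this is a condition that can only fail on special fibers; hence it descends from the geometric generic fiber to a very general, in particular general, closed fiber.

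Next I would apply Corollary \ref{cor25} to $F$. Being smooth projective of maximal Albanese dimension, $F$ possesses a good minimal model: a normal projective variety with $\mathbb Q$-factorial terminal singularities, birationally equivalent to $F$, whose canonical divisor is semi-ample. This is exactly the standing hypothesis—that the general fiber has a good minimal model—under which Kawamata establishes both the positivity of $\widehat{\det}(f_*(\omega^n_{X/S}))$ relative to the variation $\Var(f)$ asserted in (i) and the logarithmic subadditivity asserted in (ii). With the hypothesis in place, the two assertions are precisely \cite[Theorem 1.1]{kawamata2.5} applied to our $f$ and $\mathcal L$, so I would conclude by citing that theorem.

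The only genuine work—and hence the step I expect to be the main obstacle—is the first one: confirming that maximal Albanese dimension descends from the geometric generic fiber to a general closed fiber, so that Corollary \ref{cor25} becomes applicable fiberwise. All the deep content, namely the weak positivity of the Hodge-theoretic direct image sheaves and the covering constructions yielding the stated bounds, is already contained in \cite{kawamata2.5}; once the good-minimal-model hypothesis for the general fiber is secured, nothing further needs to be proved here.
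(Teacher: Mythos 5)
Your proposal is correct and is essentially the paper's own argument: the paper gives no written proof beyond the sentence that the theorem follows ``by combining Corollary \ref{cor25} with the main theorem of \cite{kawamata2.5},'' which is exactly your reduction (use Corollary \ref{cor25} to verify the good-minimal-model hypothesis on the fibers, then quote Kawamata's Theorem 1.1). Your extra care about descending maximal Albanese dimension from the geometric generic fiber to general fibers is a reasonable technical point that the paper silently omits, but it does not change the route.
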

The next corollary is a special case of \cite[Corollary 1.2]{kawamata2.5} 
(cf.~\cite[Theorem 0.2]{fujino1}. 

\begin{cor}\label{cor37}
Under the same assumptions as in {\em{Theorem \ref{thm36}}}, 
\begin{itemize}
\item[(i)] $\kappa (X, \omega_{X/S})\geq \kappa (X_\eta)
+\Var (f)$, and 
\item[(ii)] if $\kappa (S)\geq 0$, then 
$\kappa (X)\geq \kappa (X_\eta)+\max \{\kappa (S), \Var (f)\}$. 
\end{itemize}
\end{cor}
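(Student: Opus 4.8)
The plan is to deduce both assertions directly from Theorem \ref{thm36}(ii) by making two judicious choices of the auxiliary line bundle $\mathcal L$; no new geometry is needed, since the hard input (maximal Albanese dimension of the geometric generic fiber) is already baked into Theorem \ref{thm36}, which I invoke as a black box.

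First I would prove (i) by taking $\mathcal L=\mathcal O_S$. Since $S$ is projective and connected, $H^0(S,\mathcal O_S)=\mathbb C$, so $\kappa(S,\mathcal O_S)=0\geq 0$ and the hypothesis $\kappa(S,\mathcal L)\geq 0$ of Theorem \ref{thm36}(ii) is satisfied. Because $\omega_{X/S}\otimes f^*\mathcal O_S=\omega_{X/S}$ and the variation $\Var(f)$ is always non-negative, the maximum on the right-hand side collapses to $\max\{0,\Var(f)\}=\Var(f)$. Theorem \ref{thm36}(ii) then reads $\kappa(X,\omega_{X/S})\geq \kappa(X_\eta)+\Var(f)$, which is precisely (i).

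Next I would prove (ii) by taking $\mathcal L=\omega_S$. Under the standing assumption $\kappa(S)\geq 0$ we have $\kappa(S,\omega_S)=\kappa(S)\geq 0$, so again the hypothesis of Theorem \ref{thm36}(ii) holds. Using the defining relation $\omega_X=\omega_{X/S}\otimes f^*\omega_S$, the quantity $\kappa(X,\omega_{X/S}\otimes f^*\mathcal L)$ on the left is exactly $\kappa(X)$, while the right-hand side becomes $\kappa(X_\eta)+\max\{\kappa(S),\Var(f)\}$. This yields (ii).

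There is essentially no obstacle to overcome: the entire content is recognizing the two correct specializations of $\mathcal L$ together with the elementary facts $\kappa(S,\mathcal O_S)=0$, $\Var(f)\geq 0$, and the twist identity $\omega_X=\omega_{X/S}\otimes f^*\omega_S$. If anything requires a line of care, it is confirming that $\omega_{X/S}$ is normalized so that this last identity holds on the nose (which is its definition here), so that substituting $\mathcal L=\omega_S$ converts the relative statement into the absolute one for $\kappa(X)$.
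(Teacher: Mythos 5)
Your proposal is correct and is exactly the intended derivation: the paper gives no written proof (it simply notes the corollary is a special case of Kawamata's result), and the standard way to obtain it is precisely your two specializations $\mathcal L=\mathcal O_S$ and $\mathcal L=\omega_S$ in Theorem \ref{thm36}(ii), using $\kappa(S,\mathcal O_S)=0$, $\Var(f)\geq 0$, and $\omega_X=\omega_{X/S}\otimes f^*\omega_S$.
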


\subsection{Ueno's problem}
The final theorem is a supplement to 
\cite[Remark 10.13]{ueno}. It is an easy consequence of 
Lemma \ref{lem32}. 

\begin{thm}\label{thm38}
Let $X$ be a projective variety and $B$ an effective 
$\mathbb R$-divisor on $X$ such that 
$(X, B)$ is log canonical. 
Assume that there are no rational curves on $X$. 
Then $K_X+B$ is nef. 

Furthermore, we assume that 
$(X, B)$ is klt, $B$ is an effective $\mathbb Q$-divisor, and 
$K_X+B$ is big. 
Then $K_X+B$ is ample. 
\end{thm}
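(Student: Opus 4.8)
The plan is to deduce both assertions from Lemma \ref{lem32}, which is exactly why the statement is advertised as an easy consequence of it. For the first assertion I would simply argue by contradiction: if $K_X+B$ were not nef, then, $(X,B)$ being log canonical, Lemma \ref{lem32} would produce a rational curve $C$ on $X$ with $(K_X+B)\cdot C<0$. Since the hypothesis forbids any rational curve on $X$, this is impossible, so $K_X+B$ is nef.

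For the second assertion, note first that a klt pair is log canonical, so the first part already gives that $K_X+B$ is nef; the task is to upgrade nefness to ampleness, and here I would use bigness to reduce once more to Lemma \ref{lem32}. Assume for contradiction that $K_X+B$ is not ample. By Kleiman's criterion there is a nonzero class $\gamma\in\overline{NE}(X)$ with $(K_X+B)\cdot\gamma=0$, while $A\cdot\gamma>0$ for every ample $A$. Since $K_X+B$ is big and $B$ is a $\mathbb Q$-divisor (so $K_X+B$ is $\mathbb Q$-Cartier), Kodaira's lemma lets me write $K_X+B\sim_{\mathbb Q}A+E$ with $A$ an ample $\mathbb Q$-divisor and $E\geq 0$ effective. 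For a small rational $t>0$ I set $B''=B+\tfrac{t}{1-t}E$. Then $(X,B'')$ is again klt, because adding a small effective divisor to a klt boundary keeps all discrepancies above $-1$, and a direct computation gives $(1-t)(K_X+B'')\sim_{\mathbb Q}K_X+B-tA$, whence $(K_X+B'')\cdot\gamma=\tfrac{-t\,A\cdot\gamma}{1-t}<0$. Thus $K_X+B''$ is not nef, and applying Lemma \ref{lem32} to the log canonical pair $(X,B'')$ yields a rational curve on $X$, contradicting the hypothesis. Hence $K_X+B$ is ample.

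The main obstacle is precisely this perturbation step in the second assertion: one must manufacture an \emph{effective} boundary $B''$ that keeps $(X,B'')$ klt yet makes $K_X+B''$ strictly negative on the $(K_X+B)$-trivial class $\gamma$, so that Lemma \ref{lem32} can be invoked. Kodaira's lemma supplies the ample-plus-effective decomposition, and the openness of the klt condition under small perturbations guarantees that $(X,B'')$ stays klt; together these reduce everything to the negativity lemma. (One could instead invoke the base-point-free theorem to obtain the birational contraction $f\colon X\to Y$ associated with the nef and big divisor $K_X+B$ and argue that its positive-dimensional fibers contain rational curves, but the perturbation route stays closer to Lemma \ref{lem32} and avoids appealing to rationality of contracted loci.)
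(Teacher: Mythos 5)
Your proof of the first assertion is exactly the paper's (contrapositive of Lemma \ref{lem32}), and your proof of the second assertion is correct, but it takes a genuinely different route. The paper first applies the base point free theorem to the nef and big divisor $K_X+B$ to get a birational contraction $f=\Phi_{|m(K_X+B)|}:X\to S$ with $m(K_X+B)\sim f^*H$, $H$ ample on $S$; assuming $f$ is not an isomorphism, it chooses an $f$-ample Cartier divisor $A$, finds an effective $D\sim -A+f^*lH$ for $l\gg 0$, and observes that $(X,B+\varepsilon D)$ is klt while $K_X+B+\varepsilon D$ is negative on curves contracted by $f$, so Lemma \ref{lem32} yields the contradictory rational curve. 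You instead bypass the base point free theorem entirely: Kleiman's criterion supplies a nonzero $(K_X+B)$-trivial class $\gamma\in\overline{NE}(X)$, Kodaira's lemma supplies the decomposition $K_X+B\sim_{\mathbb Q}A+E$, and the boundary $B''=B+\tfrac{t}{1-t}E$ gives $(1-t)(K_X+B'')\sim_{\mathbb Q}K_X+B-tA$, which is negative on $\gamma$, so Lemma \ref{lem32} again applies to the still-klt pair $(X,B'')$. Both arguments are perturbation arguments terminating in Lemma \ref{lem32}; yours is more elementary in that it needs only Kleiman's criterion, Kodaira's lemma, and openness of the klt condition, whereas the paper's invokes the (much deeper) base point free theorem but in exchange exhibits the contraction $f$ explicitly and makes the failure of nefness after perturbation geometrically transparent on the fibers of $f$. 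One terminological quibble: what you call ``the negativity lemma'' at the end is really Lemma \ref{lem32} (the cone-theorem statement on negative rational curves), not the standard negativity lemma of the minimal model program.
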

\begin{proof}
The first half of this theorem is obvious by Lemma \ref{lem32}. 
If $(X, B)$ is klt, 
$B$ is an effective $\mathbb Q$-divisor, 
and $K_X+B$ is big, 
then $K_X+B$ is semi-ample by the 
base point free theorem since $K_X+B$ is nef. 
Then there exists a birational 
morphism $$f=\Phi _{|m(K_X+B)|}: X\to S$$ 
with $f_*\mathcal O_X\simeq \mathcal O_S$ for some 
large and divisible 
integer $m$. 
By the construction, there is an 
ample Cartier divisor $H$ on $S$ such 
that $m(K_X+B)\sim f^*H$.  
Assume that $f$ is not an isomorphism. 
Let $A$ be an $f$-ample Cartier divisor. 
Then there is an effective Cartier divisor $D$ on $X$ such that 
$D\sim -A+f^*lH$ for some large integer $l$. 
Therefore, $(X, B+\varepsilon D)$ is klt and 
$K_X+B+\varepsilon D$ is not nef 
for $0<\varepsilon \ll 1$ since $f$ is not an isomorphism. 
By Lemma \ref{lem32}, there exists 
a rational curve $C$ on $X$ such that 
$(K_X+B+\varepsilon D)\cdot C<0$. 
It is a contradiction because there are no rational curves on $X$. 
Therefore, $f$ is an isomorphism. 
Thus, $K_X+B$ is ample. 
\end{proof}

\begin{cor}[{cf.~\cite[Remark 10.13]{ueno}}]
Let $W$ be a submanifold of a complex torus $T$ with 
$\kappa (W)=\dim W$. 
Then $K_W$ is ample.  
\end{cor}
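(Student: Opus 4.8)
The plan is to obtain this as an immediate application of Theorem~\ref{thm38} to the pair $(W,0)$, after settling two preliminary points: that $W$ is projective, and that $W$ contains no rational curves.

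First I would observe that $W$, being a closed submanifold of the complex torus $T=\mathbb C^n/\Lambda$, inherits the translation-invariant K\"ahler metric of $T$, so $W$ is a compact K\"ahler manifold. The hypothesis $\kappa(W)=\dim W$ says exactly that $K_W$ is big, i.e.\ $W$ is of general type; in particular the algebraic dimension of $W$ equals $\dim W$, so $W$ is Moishezon. A Moishezon manifold that is also K\"ahler is projective (Moishezon's criterion), hence $W$ is a smooth projective variety. This is the one place where the hypothesis ``complex torus'' rather than ``Abelian variety'' genuinely requires attention: were $T$ assumed projective, projectivity of $W$ would be automatic, but in general it is the combination of the K\"ahler structure with general type that forces it. I expect this projectivity reduction to be the only real obstacle.

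Next I would record the no-rational-curves fact that underlies the whole paper. Any nonconstant morphism $\mathbb P^1\to T$ would lift along the universal cover $\mathbb C^n\to T$ to a nonconstant holomorphic map $\mathbb P^1\to \mathbb C^n$, which is impossible since $\mathbb P^1$ is compact and connected. Thus $T$, and a fortiori its submanifold $W$, carries no rational curves.

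Finally I would apply Theorem~\ref{thm38} with $B=0$. The pair $(W,0)$ is klt because $W$ is smooth, and $B=0$ is trivially an effective $\mathbb Q$-divisor. Since $W$ has no rational curves, the first assertion of Theorem~\ref{thm38} gives that $K_W$ is nef; since $K_W$ is big by the discussion above, the second assertion then yields that $K_W$ is ample. Everything beyond the initial projectivity step is a direct invocation of Theorem~\ref{thm38}.
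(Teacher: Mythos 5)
Your proof is correct and reaches the same endpoint as the paper --- reduce to Theorem~\ref{thm38} applied to the klt pair $(W,0)$ --- but it handles the projectivity of $W$ by a genuinely different route. The paper invokes Ueno's structure theorem for subvarieties of complex tori (\cite[Lemma 10.8]{ueno}): since $\kappa(W)=\dim W$, the subtorus along which $W$ would fiber is trivial, so $W$ lies in an Abelian variety and is therefore projective. You instead argue intrinsically: $W$ is K\"ahler as a submanifold of $T$, the hypothesis $\kappa(W)=\dim W$ forces the algebraic dimension to be maximal so $W$ is Moishezon, and Moishezon plus K\"ahler gives projectivity by Moishezon's criterion. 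Both are valid. Ueno's lemma buys the stronger geometric conclusion that $W$ actually sits inside an Abelian variety (the form in which the fact is used elsewhere, e.g.\ behind Lemma~\ref{lem35}), while your route avoids the structure theory of subvarieties of tori at the cost of quoting Moishezon's projectivity criterion. You also spell out the verification that $W$ carries no rational curves (lifting a map from $\mathbb P^1$ to the universal cover of $T$), which is a hypothesis of Theorem~\ref{thm38} that the paper leaves implicit; that sentence is worth keeping.
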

\begin{proof} 
By \cite[Lemma 10.8]{ueno}, there is an Abelian 
variety $A$ which is a complex subtorus of $T$ such that 
$W\subset A$. 
Thus, $W$ is projective. 
Then, by Theorem \ref{thm38}, we obtain $K_W$ is ample. 
\end{proof}

\ifx\undefined\bysame
\newcommand{\bysame|{leavemode\hbox to3em{\hrulefill}\,}
\fi

\end{document}